\newcommand{\PP}{\mathbb{P}}
\newcommand{\CC}{\mbox{${\mathbb C}$}}
\newcommand{\F}{\mbox{${\mathcal F}$}}
\newcommand{\I}{\mbox{${\mathcal I}$}}
\newcommand{\ZZ}{\mbox{${\mathbb Z}$}}
\newcommand{\Oc}{\mbox{${\mathcal O}$}}
\newcommand{\lra}{\longrightarrow}
\newtheorem{lema}{Lemma}[section]
\newtheorem{corolario}[lema]{Corollary}
\newtheorem{teorema}[lema]{Theorem}
\newtheorem{proposicion}[lema]{Proposition}
\newtheorem{remark}[lema]{Remark}
\newtheorem{definicion}[lema]{Definition}
\newenvironment{demostracion}
  {\noindent {{\it  Proof.}}}
  {\par \hfill \fbox{}}
\begin{document}

\title[{\sc On the singular scheme of projective foliations in $\PP^3$}]
{On the singular scheme of codimension one holomorphic foliations
in $\PP^3$}
\author{Luis Giraldo}
\address{Departamento de Geometr\'{\i}a y Topolog\'{\i}a, Facultad
de Ciencias Matem\'aticas, Universidad Complutense de Madrid.
Plaza de Ciencias 3, Ciudad Universitaria. 28040 Madrid, SPAIN}
\email{luis.giraldo@mat.ucm.es}
\author{Antonio J. Pan-Collantes}
\address{Departamento de Matem\'aticas, Facultad de Ciencias, Universidad
de C\'adiz. Pol\'{\i}gono R\'{\i}o San Pedro. 11510 Puerto Real
(C\'adiz), SPAIN.} \email{antonio.pan@uca.es}

\thanks{Partially supported by Plan Nacional I+D grant no.
MTM2004-07203-C02-02 and MTM2007-61124, Spain. The second author
also supported by the FPU grant no. AP2006-03911,   Ministerio de
Educaci\'on y Ciencia, Spain}
\subjclass{Primary 32S65, 37F75; secondary
14F05}
\keywords{Holomorphic foliations, reflexive sheaves, split
vector bundles}
\date{September 2008}


\begin{abstract}
In this work, we begin by showing that a holomorphic foliation
with singularities is reduced if and only if its normal sheaf is
torsion free. In addition, when the codimension of the singular
locus is at least two, it is shown that being reduced is
equivalent to the reflexivity of the tangent sheaf. Our main
results state on one hand, that the tangent sheaf of a codimension
one foliation in $\PP^3$ is locally free if and only the singular
scheme is a curve, and that it splits if and only if that curve is
arithmetically Cohen-Macaulay. On the other hand, we discuss when
a split foliation in $\PP^3$ is determined by its singular scheme.
\end{abstract}

\maketitle

\section{Introduction}

The study of holomorphic foliations with singularities in complex manifolds has attracted a lot of
interest over the last 40 years. There are very good sources to learn about the different aspects
of the theory, and we just mention here the classical paper \cite{Baum-Bott} where a general theory
in terms of coherent sheaves was developed, and the recent book \cite{Suwa}.

\vskip 2mm

The tangent sheaf of the foliation, its normal sheaf and singular
locus are the key elements for this study. Needless to say, a very
important problem  is to analyze if the tangent sheaf of the
foliation is locally free as well as further properties (as the
splitting) when it is.
These properties have important consequences, as for example the ones
recently noted in \cite{Nosotros} and
\cite{Cukierman-Pereira}, where it has been proved that the fact that the
tangent sheaf splits, along with some properties of the singular
locus, give stability under deformations of the foliation and make
it possible to characterize certain components of the space that
parameterizes holomorphic foliations.

\vskip 2mm

The first section of this paper is devoted to the study of the
tangent and normal sheaves. We prove some key technical results
for the main goals of our research that, even when mentioned in
some references, were not explicitly found in the literature on
singular foliations. We think that they could be useful for other
people working in the field. Namely, we prove that a foliation is
reduced if and only if  its normal sheaf is torsion free and that,
under the assumption that the singular locus is of codimension at
least two, a foliation is reduced if and only if the tangent sheaf
is reflexive.

\vskip 2mm

As an immediate application  of  the reflexivity of the tangent sheaf, we give an affirmative
answer to the question posed by Suwa in \cite{Suwa}: Is  the
tangent sheaf to a 1 dimensional reduced foliation locally free?

\vskip 2mm

The main results contained in this paper establish a link between the above mentioned properties
of the tangent sheaf and some algebro-geometric properties of the singular scheme. For codimension
one foliations in $\PP^3$, using some results by Roggero on reflexive sheaves (see \cite{Roggero-Valabrega})
we prove that the
tangent sheaf is locally free if and only if the singular scheme is a curve, i.e. a Cohen-Macaulay scheme of
pure dimension one (Theorem \ref{locallyfree}); moreover, this sheaf splits if and only if the singular
scheme is an arithmetically Cohen-Macaulay curve (Theorem \ref{splitaCM}).

We also prove (Theorem \ref{splitsaturado}) an algebraic characterization of the splitting of a locally  free tangent
sheaf. It is given by the fact that
the homogeneous ideal generated by the coefficients of a form defining the foliation, that defines the
singular scheme, is saturated.

\vskip 0.5cm

An interesting problem is to decide when the singular scheme uniquely determines the foliation. That is,
given a foliation defined by a 1-form $\omega$, and having $Z$ as singular scheme, is it true
that if the 1-form $\omega^{\prime}$ defines a foliation with $Z$ as its singular scheme then
we have $\omega=\lambda \omega^{\prime}$ for $\lambda \in \CC^*$?.

In dimension 2, the problem was completely solved recently by Campillo and Olivares in
\cite{Campillo-Olivares}: for reduced foliations of degree $d\neq 1$ there are no two different foliations with the same
singular scheme.

\vskip 0.3cm

With our approach, we get simpler proofs of their main results,
and more importantly we prove some results in the $\PP^3$ case. Namely, we prove
unique determination by the singular
scheme for codimension one degree $d$ foliations in $\PP^3$ with
split tangent sheaf, when the splitting type is $\mathcal
O_{\PP^3}(a) \oplus \mathcal O_{\PP^3}(b)$, with $a, b\leq -1$.

We also show that if there is a subbundle of the tangent bundle of the foliation of the form
$\mathcal O_{\PP^3}(1)$, then it
is a linear pull-back of a foliation in a plane, and so is determined by its singular scheme if $d\neq 1$.

When there is a $\mathcal O_{\PP^3}$ subbundle of
$\mathcal F$, we just show some examples that illustrate the situation: there are foliations determined by the singular
scheme (exceptional foliations, see \cite{Nosotros}), and others that are not: the logarithmic foliations
of type $\mathcal L (1,1,1,1)$.

\section{Basic facts concerning the tangent sheaf}

\subsection{Preliminaries}

In this section we recall the basics of the theory. The following definition is taken from \cite{Baum-Bott}:
\begin{definicion}
Let $M$ be a complex $n$-dimensional manifold, and let $TM$ be its tangent sheaf.
\vskip 2mm

\noindent $\bullet$ A codimension $k$ holomorphic foliation with singularities in $M$
is an injective morphism of sheaves $\varphi: {\mathcal F} \longrightarrow TM$,
such that $\varphi({\mathcal F})$ is an integrable coherent subsheaf of $TM$, of rank $n-k$.
The integrability means that for each point
$x\in M$, $\varphi({\mathcal F})_x$ is closed under the bracket operation for vector fields.

\noindent {\rm Notation: we will denote by ${\mathcal F}_{\varphi}$ the foliation given by
$\varphi: {\mathcal F} \longrightarrow TM$}.

\noindent $\bullet$ The sheaves ${\mathcal F}$ and the  ${\mathcal N_{\mathcal F_{\varphi}}}:=TM/\varphi({\mathcal F})$ are,
respectively, the tangent and normal sheaves
of the foliation.

\noindent $\bullet$
$
Sing({\mathcal F}_{\varphi})=\{ x\in M \, | \, (\mathcal N_{\mathcal F_{\varphi}})_x\,
\text{is not a free}\,{\mathcal O}_{M,x}-\text{module}\}
$
is the singular set of the foliation ${\mathcal F}_{\varphi}$.
\end{definicion}

The following facts can be found, for instance,  in  \cite{Okonek}:
\begin{enumerate}
\item The normal sheaf is coherent.
\item By definition, $Sing({\mathcal F}_{\varphi})$ is the singular set of the sheaf ${\mathcal N_{\mathcal F_{\varphi}}}$.
It is a closed analytic subvariety of $M$ of codimension at least one.
\item There is a rank $n-k$ holomorphic vector bundle $F$ on $M\setminus Sing({\mathcal F}_{\varphi})$ whose sheaf of
sections is $\varphi({\mathcal F})|_{M\setminus Sing({\mathcal
F}_{\varphi})}$.
\end{enumerate}

Note also that being a subsheaf of a locally free sheaf, ${\mathcal F}$ is  torsion free
and therefore its singular set
$S({\mathcal F})=\{x\in M\, | \, {\mathcal F}_x \,\text{is not a free}\, {\mathcal O}_{M,x}-\text{module}\}$
is of codimension $\geq 2$.

Moreover, it is also clear that $S({\mathcal F})\subset Sing({\mathcal F}_{\varphi})$.
Indeed, if $x\not\in Sing({\mathcal F}_{\varphi})$ there is a neighbourhood $V$ of $x$ such that the sequence
$$
0\longrightarrow {\mathcal F}(V) \longrightarrow TM(V) \longrightarrow {\mathcal N_{\mathcal F_{\varphi}}}(V)
\longrightarrow 0
$$
is an exact sequence of ${\mathcal O}_{M}(V)$ modules, and the second and third modules are free. Hence
(see \cite{Eisenbud}, Appendix 3)
the sequence splits since ${\mathcal N_{\mathcal F_{\varphi}}}(V)$ is projective and ${\mathcal F}(V)$ is a direct summand
of a free module, and so it is projective.
We conclude that, for every $p\in V$, ${\mathcal F}_p$ is a free ${\mathcal O}_{M,p}$ module. In particular,
$x\not\in S({\mathcal F})$.

\vskip 3mm

In the study of singular holomorphic foliations, it is usual to deal with reduced ones. Let us recall their definition
(\cite{Baum-Bott}):
\begin{definicion} Let ${\mathcal F}_{\varphi}$  be a foliation:

\noindent $\bullet$ $\varphi({\mathcal F})$ is full if given any open set
$U\subset M$, and $\gamma $ a holomorphic section of $TM|_U$ such that
$\gamma (x)\in \varphi({\mathcal F})_x$ for each $x\in U\cap (M\setminus Sing({\mathcal F}_{\varphi}))$,
then at each point $p\in U\cap Sing({\mathcal F}_{\varphi})$ the germ of the holomorphic vector field $\gamma$ is in
$\varphi({\mathcal F})_p$.

\noindent $\bullet$ ${\mathcal F}_{\varphi}$ is reduced if $\varphi({\mathcal F})$ is full.
\end{definicion}

\vskip 3mm

\subsection{Some remarks}

For the sake of completeness and lack of reference, we now state and prove some properties
of the tangent and normal sheaves of a holomorphic foliation.
Recall that a coherent sheaf $\F$ is reflexive if and only if it is isomorphic to its bidual $\F^{**}$ (\cite{Okonek},
\cite{Hartshorne}).
\begin{remark}\label{main}
The foliation ${\mathcal F}_{\varphi}$ is reduced if and only if
${\mathcal N_{\mathcal F_{\varphi}}}$ is torsion free.
If the singular locus is of codimension at least $2$, the foliation is  reduced if
and only if ${\mathcal F}$ is a reflexive sheaf.
\end{remark}

Indeed,
assume that ${\mathcal F}_{\varphi}$ is reduced, hence  $\varphi({\mathcal F})$ is full. Torsion freeness is a local
property and locally free sheaves are torsion free, so it is enough to show that $(\mathcal N_{\mathcal
F_{\varphi}})_x$ is a torsion free ${\mathcal O}_{M,x}$ -module for $x\in Sing({\mathcal F}_{\varphi})$.

Let us suppose, on the contrary,  that it is not. Then, there is a nonzero element  $m\in (\mathcal N_{\mathcal
F_{\varphi}})_x$ and a nonzero element $a\in {\mathcal O}_{M,x}$ such that $am=0$ in $(\mathcal N_{\mathcal
F_{\varphi}})_x$. Let $U$ be a small enough neighborhood of $x$ in $M$, so that we can take $\alpha \in {\mathcal
O}_{M}(U)$ and $\gamma \in TM(U)$ such that $\alpha_x=a$, $\pi_x(\gamma_x)=m$.

${\mathcal N_{\mathcal F_{\varphi}}}$ is locally free in $V=U\cap (M\setminus Sing({\mathcal
F}_{\varphi}))$, hence torsion free and we have that
$$
\gamma (U\cap (M\setminus Sing({\mathcal F}_{\varphi})) \in
\varphi({\mathcal F})(U\cap (M\setminus Sing({\mathcal F}_{\varphi})).
$$
As  $\varphi({\mathcal F})$ is full, we have that $\gamma (U)\in \varphi({\mathcal F})(U)$,
which gives $m=0$ in $(\mathcal N_{\mathcal F_{\varphi}})_x$, a
contradiction.

\vskip 3mm

Conversely, suppose that ${\mathcal N_{\mathcal F_{\varphi}}}$ is torsion free. If $U\subset M$ is an open subset and
$\gamma $
is a section of $TM(U)$ such that $\gamma_x\in (\varphi({\mathcal F}))_x$
for each $x\in U\cap (M\setminus Sing({\mathcal F}_{\varphi}))$,
then for a nonzero element $f\in {\mathcal O}_M(U)$  vanishing on the
analytic set $Sing({\mathcal F}_{\varphi})$, and for each $p\in U\cap Sing({\mathcal F}_{\varphi})$,
 $(f\,\gamma)_p=f_p {\gamma}_p$ gives the zero element in $({\mathcal N_{\mathcal F_{\varphi}}})_p$. Hence,
$\gamma_p \in (\varphi({\mathcal F}))_p$. Thus, $\varphi({\mathcal F})$
is full and the foliation is reduced.

\vskip 2mm

The second part of the statement follows from the first,  using
that a torsion free sheaf $\mathcal F$ is reflexive if and only if there is a locally free sheaf
$\mathcal E$ such that $\mathcal F\subset \mathcal E$ and $\mathcal E / \mathcal F$ is torsion free
(see \cite[p.61]{miyaoka}).

\vskip 0.25cm

\noindent We can make the following further remarks:
\begin{itemize}
\item The singular locus of a reflexive sheaf is  of codimension $\geq 3$ (see \cite{Hartshorne}, \cite{Okonek}). Hence, the
same is true for the singular set of the tangent sheaf of a reduced holomorphic foliation.
\item
The tangent sheaf of a foliation ${\mathcal F}$ is torsion free and we have (\cite{Okonek}), that ${\mathcal F}\subset
{\mathcal F}^{**}$. Hence, if the singular locus of the foliation is of codimension $\geq 2$ we can consider the
reduced foliation given by the double dual of ${\mathcal F}$.
\end{itemize}

\vskip 0.3cm

Let us finally obtain a consequence of Remark \ref{main}.
Suwa, in \cite{Suwa}, made a distinction between  1-dimensional singular holomorphic foliations
and ``foliations by
curves'', a concept that was defined in \cite{Gomez-Mont2} where it is shown that they correspond to holomorphic
foliations with singularities with locally free rank 1 tangent sheaf. We now answer the question that he posed in
\cite{Suwa}, Remark 1.9, page 179 :

\vskip 2mm

\emph{The tangent sheaf of a dimension one reduced foliation on a complex $n$-dimensional manifold is a line bundle}

\vskip 2mm

It is a consequence of our Theorem \ref{main}, and of the fact that a reflexive sheaf of
rank one is a line bundle (see Lemma 1.1.15, page 154, in \cite{Okonek}).

We obtain an immediate proof of  Proposition 1.7 in \cite{Suwa}
(with no use of \cite{MY} or \cite{Suwa1}):

\vskip 2mm

\noindent \emph{Let ${\mathcal F}_{\varphi}$ be a holomorphic foliation with singularities.
\begin{enumerate}
\item If it is  reduced, the codimension of its singular locus is $\geq 2$;
\item if its tangent
sheaf ${\mathcal F}$  is locally free, and the
codimension of the singular locus of ${\mathcal F}_{\varphi}$ is at least 2, then the foliation is
reduced.
\end{enumerate}}
The first assertion follows from Proposition \ref{main}, and so does the second after noting that
locally free sheaves are reflexive.

\vskip 0.5cm

\section{Tangent sheaf vs. singular scheme}

From now on, all the foliations that we will consider will be defined in projective space, of codimension one, reduced,
and with singular set of
codimension $\geq 2$. In this section we characterize when the tangent sheaf is
locally free or split (i.e., direct sum of line bundles), in terms of the geometry of the singular scheme of the
foliation.

Recall that a degree $d$ codimension one holomorphic foliation with singularities is defined by a global section
$\omega \in H^0(\mathbb P^n, \Omega^1_{\mathbb P^n}(d+2))$. The form $\omega$ satisfies the integrability condition
$\omega \wedge d\omega=0$. The degree is the number of tangencies (counted with multiplicities) of a generic line with
the foliation. We can write $\omega = \sum F_j dz_j$, where the $F_j$ are homogeneous polynomials of the same degree
$\deg F_j=d+1$ satisfying $\sum_{j=0}^n z_j F_j=0$.

The singular set of the foliation is given by $F_0=\cdots =F_n=0$, and it has a natural structure of closed subscheme
of $\mathbb P^n$, given by the homogeneous ideal $(F_0, \ldots, F_n)$. Recall
(\cite{Hartshorne77}) that two
homogeneous ideal $I, J$ define the same projective scheme $X$ if and only if they have the same saturation, i.e.,
$I^{sat}=J^{sat}$, where
$$
I^{sat}=\bigcup_{l \geq 0} (I: (z_0,\dots,z_n)^l) = \bigoplus_{n} H^0(\PP^n, \mathcal I_X (n)),
$$
$\mathcal I_X$ being the ideal sheaf of the subscheme $X$.

The singular scheme can be also obtained (see \cite{Cukierman-Soares-Vainsencher}) as the closed subscheme of $\PP^n$
whose ideal sheaf is the image of the co-section
$$
\omega^*: (\Omega_{\PP^n}(d+2))^* \lra \Oc_{\PP^n}
$$
To simplify the notation, let us write $Z:=Sing(\F_{\varphi})$, and $\I_Z=\textrm{Im}(\omega^*)$ to denote the ideal
sheaf defining the singular subscheme in $\PP^n$. The induced map
$$
T\PP^n \lra \I_Z \otimes \Oc_{\PP^n}(d+2)
$$
is surjective, and moreover, it makes the following sequence exact:
\begin{equation}
\label{sucesionideal}
0 \longrightarrow {\mathcal F} \stackrel{\varphi}\longrightarrow T\mathbb P^n \longrightarrow \I_Z \otimes
\Oc_{\PP^n}(d+2) \longrightarrow 0.
\end{equation}
Observe that $\mathcal N_{\F}=\I_Z \otimes \Oc_{\PP^n}(d+2)$.
\\

We will focus on the case $n=3$. As we have pointed out, the
tangent sheaf ${\mathcal F}$ is reflexive. Its first Chern class
can be computed from (\ref{sucesionideal}) and equals $2-d$.

We will make use of a Theorem in \cite{Roggero-Valabrega}, that we state in the particular case
that will be of use for us. Let
$\epsilon=\frac{d}{2}-1$ when $d$ even, and $\epsilon=\frac{d-1}{2}-1$ for $d$ odd. Let $h^i({\mathcal
F}(\ell)):=\dim_{\mathbb C}H^i(\mathbb P^3, {\mathcal F}(\ell))$.

\begin{teorema}[\cite{Roggero-Valabrega}]\label{criterio}
The tangent sheaf ${\mathcal F}$ is locally free if and only if $h^2({\mathcal F}(p))=0$, for some $p\leq \varepsilon -
2$.

If $h^2({\mathcal F}(p))=0$  for $p=\varepsilon - 3$ when $d$ is even, or for $p\in \{\varepsilon -4, \varepsilon -3,
\varepsilon -2\}$ when $d$ is odd, then ${\mathcal F}$ splits.
\end{teorema}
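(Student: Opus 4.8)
The plan is to prove the two assertions in turn, deriving the splitting criterion from the local-freeness criterion together with Horrocks' theorem. For the first statement I would begin from the fact that, since $\mathcal F$ is reflexive of rank $2$ on the smooth threefold $\PP^3$, its failure to be locally free is governed by a single coherent sheaf: by Auslander--Buchsbaum one has $\mathcal{E}xt^q(\mathcal F,\Oc_{\PP^3})=0$ for $q\geq 2$, while $\mathcal{E}xt^1(\mathcal F,\Oc_{\PP^3})$ is a sheaf of finite length $\tau$ supported exactly on the zero-dimensional singular set of $\mathcal F$, so that $\mathcal F$ is locally free if and only if $\tau=0$. The idea is to read $\tau$ off from the groups $H^2(\mathcal F(p))$. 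By Serre duality $H^2(\mathcal F(p))^{*}\cong \mathrm{Ext}^1(\mathcal F(p),\Oc_{\PP^3}(-4))$, and feeding this into the local-to-global spectral sequence $H^s(\mathcal{E}xt^t(\mathcal F(p),\Oc_{\PP^3}(-4)))\Rightarrow \mathrm{Ext}^{s+t}$ — whose only contributing terms are $H^s(\mathcal F(d-6-p))$ (from $t=0$, using $c_1=2-d$ and $\mathcal F^{\vee}\cong\mathcal F(d-2)$) and a space of dimension $\tau$ in bidegree $(0,1)$ — yields a formula of the shape
\[
h^2(\mathcal F(p))=h^1(\mathcal F(d-6-p))+\dim\ker\bigl(d_2\colon \CC^{\tau}\to H^2(\mathcal F(d-6-p))\bigr).
\]
For $p\leq\varepsilon-2$ the target of $d_2$ vanishes (this is the regularity bound discussed below), so $d_2=0$ and $h^2(\mathcal F(p))\geq\tau$; hence the vanishing of $h^2(\mathcal F(p))$ for a single $p$ in that range forces $\tau=0$, i.e. local freeness. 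The converse is immediate, since once $\mathcal F$ is a bundle the formula gives $h^2(\mathcal F(p))=h^1(\mathcal F(d-6-p))=0$ for $p$ sufficiently negative, and such $p$ lie in the prescribed range.

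For the splitting statement I would now assume $\mathcal F$ is a rank-$2$ bundle (which the stated hypotheses already guarantee, since the listed degrees all satisfy $p\leq\varepsilon-2$) and invoke Horrocks' criterion: $\mathcal F$ splits as a sum of line bundles exactly when its intermediate cohomology module $H^1_{*}(\mathcal F)=\bigoplus_{\ell}H^1(\mathcal F(\ell))$ vanishes. The self-duality $\mathcal F\cong\mathcal F^{\vee}(c_1)$ of a rank-$2$ bundle gives, through Serre duality, the duality $H^1(\mathcal F(\ell))^{*}\cong H^2(\mathcal F(d-6-\ell))$; thus $H^1_{*}$ and $H^2_{*}$ vanish simultaneously and are exchanged by reflection in the center $\ell=\tfrac{d-6}{2}$. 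This finite-length module is zero for $\ell$ large by Serre vanishing and, via the duality together with the regularity bound of the first part, also outside a bounded window about the center. Translating the hypotheses through $H^2(\mathcal F(p))\cong H^1(\mathcal F(d-6-p))^{*}$ converts them into the vanishing of $H^1(\mathcal F(\ell))$ at the lowest one (for $d$ even) or three (for $d$ odd) degrees of that window, and the final step is to show that $H^1_{*}(\mathcal F)$ is generated, as a graded module over the homogeneous coordinate ring, precisely in those bottom degrees, so that killing the generators kills the entire module and Horrocks' criterion applies.

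The main obstacle, and the genuine content of Roggero and Valabrega's theorem, is the numerology encoded in $\varepsilon$: one must establish that the window in which the intermediate cohomology of a rank-$2$ reflexive sheaf with $c_1=2-d$ can live is controlled by $c_1$ \emph{alone}, namely that $H^2(\mathcal F(m))=0$ for $m\geq\varepsilon-2$ and that $H^1_{*}$ is generated in the asserted bottom degrees. Proving such a bound in terms of the first Chern class only — and accounting for the parity of $d$, which is exactly what shifts the center $\tfrac{d-6}{2}$ from an integer to a half-integer and thereby changes the number of degrees one must test from one to three — requires a careful analysis of the Castelnuovo--Mumford regularity of these cohomology modules and of their behaviour under the self-duality $\mathcal F\cong\mathcal F^{\vee}(c_1)$. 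This regularity and generation estimate is where essentially all the work lies; once it is available, the cohomological bookkeeping above is formal.
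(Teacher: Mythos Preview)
The paper does not prove this theorem: it is quoted from Roggero \cite{Roggero-Valabrega} and used as a black box in the proofs of Theorems~\ref{locallyfree} and~\ref{splitaCM}. There is therefore no proof in the paper to compare your proposal against.

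As for the sketch itself: the architecture is sound. The spectral-sequence identity $h^2(\mathcal F(p))=h^1(\mathcal F(d-6-p))+\dim\ker d_2$ is correct, and the self-duality $\mathcal F^{\vee}\cong\mathcal F(d-2)$ does hold for rank-$2$ reflexive sheaves (both sides are reflexive and agree off a codimension-$3$ set), so that step is legitimate even before local freeness is known. The reduction of the splitting criterion to Horrocks via the symmetry $H^1(\mathcal F(\ell))^{*}\cong H^2(\mathcal F(d-6-\ell))$ is also the right move. However, you explicitly do not prove the one thing that carries all the weight: the \emph{a priori} bound, depending only on $c_1$, that $H^2(\mathcal F(m))=0$ once $m$ exceeds the threshold $\varepsilon-2$ (and the companion generation statement for $H^1_{*}$). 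Without that bound your argument for the ``only if'' direction of local freeness is circular at the boundary---for $p=\varepsilon-2$ in the even case the target of $d_2$ is $H^2(\mathcal F(\varepsilon-2))$ itself---and the splitting reduction has no force. You acknowledge this, which is honest, but it means what you have is an accurate map of where the difficulty lies rather than a proof; the regularity estimate you defer is precisely the content of Roggero's paper.
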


 We now obtain some results relating algebro-geometric properties of the singular
scheme to the fact that the tangent sheaf is locally free, and
also to its being split. For us, a curve will be an
equidimensional, locally Cohen-Macaulay, dimension one subscheme
of $\PP^n$. Given a curve $C$ in $\PP^3$, the Hartshorne-Rao
module is defined to be
$$
\sum_{k \in \ZZ} H^1(\PP^3,\I_C(k)).
$$
It is well known that this module is finite dimensional as a $\CC$-vector space, and that is trivial if and only if $C$
is arithmetically Cohen-Macaulay.

First, we characterize those foliations having locally free tangent sheaf.
\begin{teorema}\label{locallyfree}
The tangent sheaf ${\mathcal F}$ is locally free if and only if the singular scheme $Z$ is a curve.
\end{teorema}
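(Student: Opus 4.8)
The plan is to reduce the \emph{global} question ``is $\mathcal F$ locally free?'' to a \emph{pointwise} homological condition, and then to read the geometry of $Z$ directly off local depth. Since the foliation is reduced with singular set of codimension $\geq 2$, Remark \ref{main} tells us that $\mathcal F$ is reflexive, so its non-locally-free locus is finite. The exact sequence (\ref{sucesionideal}) presents $\mathcal F$ as the kernel of the surjection $T\PP^3 \twoheadrightarrow \I_Z(d+2)$ from the \emph{locally free} sheaf $T\PP^3$. Localizing at a closed point $x$, this exhibits $\mathcal F_x$ as a first syzygy of $(\I_Z)_x$ over the regular local ring $\Oc_{\PP^3,x}$. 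Hence $\mathcal F_x$ is free if and only if $\mathrm{pd}\,(\I_Z)_x\leq 1$ (one direction is clear; the converse follows from Schanuel's lemma, a projective syzygy over a local ring being free), and by the sequence $0\to\I_Z\to\Oc_{\PP^3}\to\Oc_Z\to 0$ this is in turn equivalent to $\mathrm{pd}\,(\Oc_Z)_x\leq 2$.

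The core computation is then the Auslander--Buchsbaum formula. As $\Oc_{\PP^3,x}$ is regular of dimension $3$, we have $\mathrm{pd}\,(\Oc_Z)_x+\mathrm{depth}\,\Oc_{Z,x}=3$, so that
$$
\mathcal F_x\ \text{is free} \iff \mathrm{pd}\,(\Oc_Z)_x\leq 2 \iff \mathrm{depth}\,\Oc_{Z,x}\geq 1 .
$$
(For $x\notin Z$ the sequence splits and $\mathcal F_x$ is automatically free.) Consequently, $\mathcal F$ is locally free if and only if $\mathrm{depth}\,\Oc_{Z,x}\geq 1$ for every $x\in Z$.

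It remains to translate the depth condition into the geometric notion of a curve. The standing hypothesis forces $\dim Z\leq 1$. The condition $\mathrm{depth}\,\Oc_{Z,x}\geq 1$ for all $x$ says precisely that $\Oc_Z$ has no zero-dimensional associated prime, i.e. that $Z$ has neither isolated nor embedded points; given $\dim Z\le1$ and $Z$ nonempty, this is exactly equidimensionality of dimension one together with $\mathrm{depth}=\dim=1$ at each point, that is, $Z$ being a locally Cohen--Macaulay curve. To finish I must also record that $Z\neq\emptyset$: a codimension one foliation on $\PP^3$ is always singular. For $d\geq 1$ this follows from a Chern class computation on (\ref{sucesionideal}), since $\I_Z=\Oc_{\PP^3}$ would make $\mathcal F$ a rank two bundle with $c_3=0$, forcing $(d^2+2)(d+2)=c_3(T\PP^3)=4$, impossible; the degree zero case, using the integrability condition $\omega\wedge d\omega=0$, is a pencil singular along a line.

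The step I expect to be the most delicate is this last dictionary, $\mathrm{depth}\,\Oc_{Z,x}\geq 1 \Leftrightarrow Z$ is a curve, where one must be careful that it is exactly the zero-dimensional (isolated or embedded) components of $Z$ that obstruct local freeness; the Auslander--Buchsbaum reduction itself is routine. I should emphasize that this homological route is self-contained and does not need the cohomological criterion of Theorem \ref{criterio}. Alternatively, one could argue cohomologically: using Bott's formulas, $H^1(T\PP^3(p))=H^2(T\PP^3(p))=0$ for $p\ll 0$, so (\ref{sucesionideal}) gives $h^2(\mathcal F(p))\cong \dim H^1(\I_Z(d+2+p))$, and for $p\ll0$ this vanishes exactly when $Z$ has no zero-dimensional part; feeding the resulting vanishing into Theorem \ref{criterio} recovers local freeness. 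The homological argument above, however, seems both shorter and sharper.
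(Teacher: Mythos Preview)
Your proof is correct and takes a genuinely different route from the paper's.

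The paper argues the two directions separately and by different methods. For $\mathcal F$ locally free $\Rightarrow$ $Z$ a curve, it observes that $Z$ is locally cut out by the $2\times 2$ minors of the $3\times 2$ matrix representing $\varphi$, hence determinantal and Cohen--Macaulay of codimension two. For the converse, it works globally and cohomologically: from the long exact sequence of (\ref{sucesionideal}) and Bott's formulas one gets $H^2(\PP^3,\mathcal F(-q))\cong H^1(\PP^3,\mathcal I_Z(d+2-q))$ for $q>4$; since $Z$ is a curve its Hartshorne--Rao module is finite-dimensional, so this vanishes for $q\gg 0$, and then the Roggero--Valabrega criterion (Theorem \ref{criterio}) gives local freeness.

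Your argument, by contrast, is entirely local and treats both directions uniformly: sequence (\ref{sucesionideal}) exhibits $\mathcal F_x$ as a first syzygy of $(\mathcal I_Z)_x$ over the regular local ring $\mathcal O_{\PP^3,x}$, so by Schanuel and Auslander--Buchsbaum, $\mathcal F_x$ is free precisely when $\mathrm{depth}\,\mathcal O_{Z,x}\geq 1$, which holds everywhere exactly when $Z$ has no zero-dimensional associated points. This is more elementary and self-contained---it avoids Theorem \ref{criterio} altogether---and pinpoints exactly which local features of $Z$ obstruct local freeness. The paper's cohomological approach, on the other hand, sets up the machinery reused in Theorem \ref{splitaCM}, where the Roggero--Valabrega criterion again does the work; and the alternative cohomological sketch you give at the end is essentially the paper's own argument for the converse. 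You are also more careful than the paper about the nonemptiness of $Z$, which is genuinely needed for the forward implication.
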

\begin{demostracion}
Suppose that the tangent sheaf is locally free, then $Z$ has no isolated points. The reason is that, in this case, the
singular scheme is given by the vanishing of the $2 \times 2$ minors of the $3 \times 2$ matrix corresponding to the
local expression of $\varphi$. Hence, $Z$ is locally determinantal and Cohen-Macaulay \cite{Eisenbud}. As the
codimension of $Z$ is at least two, we are done. Observe that the singular scheme has no embedded points and no
isolated points.

\vskip 3mm

To prove the converse, suppose that $Z$ satisfies the properties in the statement. We will make use of Theorem
\ref{criterio}.

By considering the short exact sequence (\ref{sucesionideal}), after tensoring with $\Oc_{\PP^3}(-q)$, and taking the
long exact sequence of cohomology we get:
$$
\begin{array}{rl}
\cdots \lra & H^1(\PP^3,\F(-q)) \lra H^1(\PP^3,T\PP^3(-q)) \lra H^1(\PP^3,\I_{Z}(d+2-q)) \lra \\
 \lra & H^2(\PP^3,\F(-q)) \lra  H^2(\PP^3,T\PP^3(-q))  \lra H^2(\PP^3,\I_{Z}(d+2-q)) \lra \cdots
\end{array}
$$
Observe that, from Bott's formula (see \cite{Okonek}), if we take $q>4$, then
$$
h^1(\PP^3,T\PP^3(-q))=h^2(\PP^3,T\PP^3(-q))=0 \, ,
$$
so
$$
 H^2(\PP^3,\F(-q))\simeq H^1(\PP^3,\I_{Z}(d+2-q)).
$$

Now, since the Harshorne-Rao module has finite dimension, we have
that
$$
H^1(\PP^3,\I_{Z}(d+2-q))=0
$$
for $q$ large enough, and hence for some $-q < \varepsilon-2$. Hence, $\F$ is locally free.
\end{demostracion}

\vskip 5mm

Now, we characterize foliations whose tangent sheaf splits.

\begin{teorema}\label{splitaCM}
Suppose $d >1$. $\F$ splits if and only if $Z$ is an arithmetically Cohen-Macaulay curve.
\end{teorema}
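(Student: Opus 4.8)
The plan is to read both sides of the equivalence off the graded module $\bigoplus_{\ell}H^2(\PP^3,\F(\ell))$, comparing it with the Hartshorne--Rao module of the singular curve through the defining sequence (\ref{sucesionideal}), and to invoke the Roggero--Valabrega criterion (Theorem \ref{criterio}) only for the harder implication. Since a split sheaf is locally free, Theorem \ref{locallyfree} guarantees in both directions that $Z$ is a curve; writing $C:=Z$, its Rao module $\bigoplus_k H^1(\PP^3,\I_C(k))$ is then defined, and is trivial exactly when $C$ is arithmetically Cohen--Macaulay.

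The computational input is the cohomology of $T\PP^3(\ell)$. From the Euler sequence and Bott's formula one gets $H^1(\PP^3,T\PP^3(\ell))=0$ for every $\ell$, and $H^2(\PP^3,T\PP^3(\ell))=0$ for every $\ell\neq-4$, the only exception being $h^2(T\PP^3(-4))=1$. Twisting (\ref{sucesionideal}) by $\Oc_{\PP^3}(\ell)$ and passing to cohomology, the first vanishing yields an injection
$$
H^1(\PP^3,\I_C(d+2+\ell))\hookrightarrow H^2(\PP^3,\F(\ell))\qquad\text{for all }\ell,
$$
which is moreover an isomorphism whenever $\ell\neq-4$.

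For the implication ``$\F$ splits $\Rightarrow C$ is arithmetically Cohen--Macaulay'' this is all that is needed. If $\F\cong\Oc_{\PP^3}(a)\oplus\Oc_{\PP^3}(b)$ then $H^2(\F(\ell))=0$ for every $\ell$, since the middle cohomology of a line bundle on $\PP^3$ vanishes. The injection above then forces $H^1(\I_C(k))=0$ for every $k$, so the whole Rao module is trivial and $C$ is arithmetically Cohen--Macaulay. Note that this direction needs neither Theorem \ref{criterio} nor the hypothesis $d>1$.

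For the converse I would feed the vanishing of the Rao module into Theorem \ref{criterio}. As $H^1(\I_C(k))=0$ for all $k$, the isomorphism above gives $H^2(\F(\ell))=0$ for every $\ell\neq-4$, so the splitting test degrees of Theorem \ref{criterio} are met as soon as they avoid $-4$. A direct check shows that for $d>1$ the degree $\varepsilon-3$ (for $d$ even) and the degrees $\varepsilon-3,\varepsilon-2$ (for $d$ odd) are always strictly larger than $-4$, so the only possible collision is the lowest odd test degree $\varepsilon-4$, and that equals $-4$ precisely when $d=3$. I expect this boundary degree to be the main obstacle: for $d=3$ one must still rule out $H^2(\F(-4))$, where the identification with the Rao module breaks down because $h^2(T\PP^3(-4))=1$. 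I would settle it by a separate analysis of the $\ell=-4$ segment of the long exact sequence --- using $H^1(\I_C(1))=0$ to squeeze $H^2(\F(-4))$ inside the one-dimensional $H^2(T\PP^3(-4))$ --- together with Serre duality for the rank-two bundle $\F$, where $\F^{*}\cong\F(d-2)$ identifies $h^2(\F(-4))$ with $h^1(\F(1))$, a group controlled directly by (\ref{sucesionideal}). Once this single degree is cleared, Theorem \ref{criterio} yields the splitting and closes the equivalence.
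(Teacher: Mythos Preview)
Your overall strategy coincides with the paper's: both directions run through the long exact sequence of (\ref{sucesionideal}), Bott's formula for $T\PP^3$, and Theorem~\ref{criterio}. The forward implication is identical. For the converse, the paper applies Theorem~\ref{criterio} at a \emph{single} degree, taking $p=\varepsilon-3$ for $d$ even and $p=\varepsilon-2$ for $d$ odd; in both cases Bott gives $H^2(T\PP^3(p))=0$ once $d>1$, and the aCM hypothesis kills $H^1(\I_Z(d+2+p))$, so $H^2(\F(p))=0$ and the splitting follows. No special treatment of $d=3$ is needed.

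Your perceived obstacle at $d=3$ comes from reading the odd clause of Theorem~\ref{criterio} as demanding vanishing at \emph{all three} degrees $\varepsilon-4,\varepsilon-3,\varepsilon-2$; the paper uses it with the single value $\varepsilon-2$, which already lies away from $-4$. Under that reading your extra work is unnecessary. If one insists on your stronger reading, note that the sketch you give for $d=3$ does not close: Serre duality and $\F^*\cong\F(d-2)$ reduce $h^2(\F(-4))$ to $h^1(\F(1))$, but from (\ref{sucesionideal}) this is the cokernel of $H^0(T\PP^3(1))\to H^0(\I_Z(6))$, and nothing in the aCM hypothesis alone forces that map to be surjective (this surjectivity is essentially the saturation statement of Theorem~\ref{splitsaturado}, which in the paper's logic is a \emph{consequence} of the splitting, not an input to it). So either adopt the paper's application of Theorem~\ref{criterio} at $p=\varepsilon-2$ and drop the $d=3$ detour, or supply a genuinely independent argument for $h^2(\F(-4))=0$.
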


\begin{demostracion}
Suppose $\F$ splits. In particular, it is locally free, and hence $Z$ is a curve. Consider the exact sequence
$$
0 \longrightarrow {\mathcal F}(p) \stackrel{\varphi}\longrightarrow T\mathbb P^3(p) \longrightarrow
{\mathcal I}_{Z}(d+2+p) \longrightarrow 0 \label{sucesionfoliacion}
$$
and the cohomology exact sequence:
\begin{equation}\label{longcoho}
\dots \to H^1(\PP^3,T\PP^3(p))\to H^1(\PP^3,\I_Z(d+2+p)) \to H^2(\PP^3,\F(p)) \to \dots
\end{equation}

Bott's formula tells us that $H^1(\PP^3,T\PP^3(p))=0$ for every $p$. In addition, since $\F$ splits,
$H^2(\PP^3,\F(p))=0$ for every $p$, too. Then the Hartshorne-Rao module is trivial.

\vskip 3mm

Conversely, suppose $Z$ is an arithmetically Cohen-Macaulay curve.
We use Proposition \ref{criterio}. Take $p=\varepsilon-3$ if $d$
is even, and $p=\varepsilon-2$ if $d$ is odd. It will be
sufficient to show that
$$
H^2(\PP^3,\F(p))=0 .
$$
Consider the following  piece of the long exact cohomology sequence (\ref{longcoho}):
$$
\dots \to H^1(\PP^3,\I_Z(d+2+p)) \to H^2(\PP^3,\F(p)) \to H^2(\PP^3, T\PP^3(p)) \to \dots
$$
and note that since the Hartshorne-Rao module is trivial, we just need to check that
$$
H^2(\PP^3,T\PP^3(p))=0.
$$
Suppose first that $d$ is even; then, $\varepsilon-3=\frac{d}{2}-4$ and by Bott's formula,
$H^2(\PP^3,T\PP^3(\frac{d}{2}-4))=0$ for $d>0$.
If $d$ is odd, $\varepsilon-2=\frac{d-7}{2}$, and so, $H^2(\PP^3,T\PP^3(\frac{d-7}{2}))=0$ for
$d>-1$.
\end{demostracion}

It is well known that every arithmetically Cohen-Macaulay curve is connected, so we have:
\begin{corolario}
If the tangent sheaf $\F$ splits, then $Z$ is connected.
\end{corolario}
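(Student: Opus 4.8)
The plan is to read the corollary off Theorem \ref{splitaCM}. Once $\F$ splits we already know that $Z$ is an arithmetically Cohen-Macaulay curve, so the statement reduces entirely to the classical fact that such a curve is connected. I would either cite this directly or, to keep things self-contained, reprove it using only the cohomological input already in play, namely the triviality of the Hartshorne-Rao module.

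For the self-contained route I would begin with the structure sequence of $Z$,
$$0 \lra \I_Z \lra \Oc_{\PP^3} \lra \Oc_Z \lra 0,$$
and pass to the associated long exact sequence in cohomology, evaluated in degree $0$:
$$0 \lra H^0(\PP^3,\I_Z) \lra H^0(\PP^3,\Oc_{\PP^3}) \lra H^0(\PP^3,\Oc_Z) \lra H^1(\PP^3,\I_Z) \lra \cdots$$
Three inputs then pin down $H^0(\PP^3,\Oc_Z)$. First, $H^0(\PP^3,\I_Z)=0$, because $Z$ is a nonempty curve and no nonzero constant can lie in its saturated ideal. Second, $H^0(\PP^3,\Oc_{\PP^3})=\CC$. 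Third, and this is the only place where the hypothesis is genuinely used, $H^1(\PP^3,\I_Z)=0$, since $Z$ is arithmetically Cohen-Macaulay its Hartshorne-Rao module $\bigoplus_{k}H^1(\PP^3,\I_Z(k))$ vanishes, and in particular its degree-$0$ piece does. Combining the three, the sequence collapses to $H^0(\PP^3,\Oc_Z)\cong\CC$.

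The last step is to convert $H^0(\PP^3,\Oc_Z)=\CC$ into topological connectedness of $Z$: were $Z$ the disjoint union of two nonempty closed subschemes, $H^0(\Oc_Z)$ would decompose as a product of two nonzero $\CC$-algebras and hence have dimension at least two, contradicting $H^0(\Oc_Z)=\CC$. I do not expect a real obstacle here, since Theorem \ref{splitaCM} has already done the substantive work; the single point one must not gloss over is the vanishing $H^1(\PP^3,\I_Z)=0$, which is precisely the arithmetic Cohen-Macaulay condition and is exactly what separates a connected singular scheme from one that is only locally Cohen-Macaulay.
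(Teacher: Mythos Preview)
Your proposal is correct and matches the paper's approach: the paper simply records the corollary as an immediate consequence of Theorem \ref{splitaCM} together with the well-known fact that every arithmetically Cohen-Macaulay curve is connected, and offers no further argument. Your optional self-contained verification of that classical fact via the structure sequence and the vanishing $H^1(\PP^3,\I_Z)=0$ is the standard one and is sound.
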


We now obtain  another characterization for the splitting of the tangent sheaf of a foliation:
\begin{teorema}\label{splitsaturado}
$\F$ splits if and only if it is locally free and the ideal $I=(F_0,F_1,F_2,F_3)$ is saturated.
\end{teorema}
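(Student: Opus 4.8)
The plan is to translate splitting into the vanishing of intermediate cohomology and to identify that cohomology with the obstruction to $I$ being saturated. Throughout I write $H^i_*(\F)=\bigoplus_{p\in\ZZ}H^i(\PP^3,\F(p))$. The forward implication is the easy one: if $\F$ splits it is certainly locally free, and being a direct sum of line bundles it has $H^1_*(\F)=0$ because $H^1(\PP^3,\Oc_{\PP^3}(k))=0$ for every $k$; the computation below turns this vanishing into the saturation of $I$.

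The heart of the argument is a single cohomological computation. Tensoring the defining sequence (\ref{sucesionideal}) by $\Oc_{\PP^3}(p)$ and taking cohomology yields
$$H^0(\PP^3,T\PP^3(p)) \stackrel{\psi}{\lra} H^0(\PP^3,\I_Z(d+2+p)) \lra H^1(\PP^3,\F(p)) \lra H^1(\PP^3,T\PP^3(p)).$$
By Bott's formula $H^1(\PP^3,T\PP^3(p))=0$ for all $p$, so $H^1(\PP^3,\F(p))=\mathrm{coker}\,\psi$. I would then identify both terms explicitly. By definition of the saturation, $H^0(\PP^3,\I_Z(d+2+p))=(I^{sat})_{d+2+p}$. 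For the image of $\psi$ I use the Euler sequence: every section of $T\PP^3(p)$ lifts to a tuple $(g_0,\dots,g_3)$ with each $g_j$ homogeneous of degree $p+1$, representing the field $\sum_j g_j\,\partial_{z_j}$, and the surjection $T\PP^3\lra\I_Z(d+2)$ is contraction with $\omega=\sum_j F_j\,dz_j$, so $\psi$ sends it to $\sum_j g_j F_j$. Hence $\mathrm{im}\,\psi$ is exactly the degree $d+2+p$ piece $I_{d+2+p}$ of $I=(F_0,\dots,F_3)$, giving
$$H^1(\PP^3,\F(p))\cong (I^{sat}/I)_{d+2+p}.$$
Letting $p$ run over $\ZZ$ shows $H^1_*(\F)=0$ if and only if $I=I^{sat}$, i.e. $I$ is saturated.

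For the converse I assume $\F$ locally free with $I$ saturated, so $H^1_*(\F)=0$ by the computation. Since $\F$ has rank two with $c_1(\F)=2-d$, local freeness gives $\F^*\cong\F(d-2)$, and Serre duality on $\PP^3$ yields $H^2(\PP^3,\F(p))^*\cong H^1(\PP^3,\F(d-6-p))$; hence $H^2_*(\F)=0$ as well. Horrocks' splitting criterion for bundles on $\PP^3$ then gives that $\F$ splits. One may instead bypass Horrocks by invoking Theorem \ref{splitaCM}: the vanishing of $H^2_*(\F)$, via the long exact sequence together with $H^1(\PP^3,T\PP^3(p))=0$, forces $H^1(\PP^3,\I_Z(d+2+p))=0$ for all $p$, so the Hartshorne-Rao module of $Z$ is trivial, $Z$ is arithmetically Cohen-Macaulay, and $\F$ splits. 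Combining this with the forward implication settles the theorem.

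I expect the one delicate point to be the identification $\mathrm{im}\,\psi=I_{d+2+p}$: one must check via the Euler sequence that every global section of $T\PP^3(p)$ arises from such a tuple (which uses $H^1(\Oc_{\PP^3}(p))=0$) and that the induced map to $\I_Z(d+2)$ really is contraction with $\omega$, so that the image is the \emph{entire} graded piece of the ideal generated by the coefficients $F_j$ rather than a proper submodule of it. Everything else is routine bookkeeping with Bott's formula and Serre duality.
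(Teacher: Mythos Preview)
Your proposal is correct and follows essentially the same route as the paper. The paper packages your ``lift via the Euler sequence'' step into a single short exact sequence $0\to\F\oplus\Oc_{\PP^3}\to\Oc_{\PP^3}^{\oplus 4}(1)\to\I_Z(d+2)\to 0$, but the content is identical: both arguments identify $H^1(\PP^3,\F(p))$ with $(I^{sat}/I)_{d+2+p}$, then use the rank-two isomorphism $\F^*\cong\F\otimes\det\F^*$ together with Serre duality to kill $H^2_*(\F)$, and conclude with Horrocks' criterion.
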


\begin{proof}
From the exact sequence (\ref{sucesionideal}) and  Euler's sequence
$$
0 \lra \Oc_{\PP^3} \lra  \Oc_{\PP^3}^{\oplus 4} (1)  \lra T \PP^3 \lra 0
$$
we get the exact sequence:
\begin{equation}\label{mixta}
0 \longrightarrow {\mathcal F} \oplus \Oc_{\PP^3} \longrightarrow \Oc_{\PP^3}^{\oplus 4}(1)
\longrightarrow \I_Z \otimes \Oc_{\PP^3}(d+2) \longrightarrow 0 .
\end{equation}

If $\F$ splits, $H^1(\PP^3,{\mathcal F}(q) \oplus \Oc_{\PP^3}(q))=0$ for every $q$, and so we
have a surjective map:
\begin{equation}\label{surjective}
H^0(\PP^3, \Oc_{\PP^3}^{\oplus 4}(1+q)) \lra H^0(\PP^3,\I_Z \otimes \Oc_{\PP^3}(d+2+q))
\end{equation}
By simple inspection of the map inducing this surjection we conclude that
$$
H^0(\PP^3,\I_Z \otimes \Oc_{\PP^3}(d+2+q))=I_{d+2+q}
$$
for every $q\in \ZZ$, and therefore $I$ is saturated.

Conversely, if the ideal $I$ is saturated the map (\ref{surjective}) is surjective. Since
$$
H^1(\PP^3, \Oc_{\PP^3}^{\oplus 4}(1))=0,
$$
we get that $H^1(\PP^3,{\mathcal F}(q) \oplus \Oc_{\PP^3}(q))=0$,
and hence $H^1(\PP^3,{\mathcal F}(q))=0$, for every $q$.

In order to apply Horrocks' criterion for the splitting of the vector bundle
${\mathcal F}$ (see \cite{Okonek}, p. 39),
we just need to prove also that  $H^2(\PP^3,{\mathcal F}(q))=0$ for every $q$.
By Serre's duality
$$
H^2(\PP^3,{\mathcal F}(q))= H^1(\PP^3,{\mathcal F}^{*}(-q-4))\quad \text{for every}\quad q.
$$
But (this can be found in Hirzebruch \cite{Hirzebruch})
$$
{\mathcal F}^{*}\simeq \text{det}\,{\mathcal F}^{*} \otimes {\mathcal F},
$$
and so the vanishing follows from the previous calculation.
\end{proof}

\vskip 0.5cm

Let us observe that Theorem \ref{splitsaturado} translates into an algebraic setting the
splitting of the tangent sheaf of the foliation. Thus, it could be an effective tool in
trying to solve the following important open problem, posed in \cite{Nosotros}:
\emph{Given a foliation in $\PP^3$ with locally free tangent sheaf, does it split?}

The  question above can be restated as follows: let $\omega= \sum_{i=0}^3 F_i dz_i$ define
a degree $d$ foliation in $\PP^3$, with locally free tangent sheaf (i.e. the scheme
defined by $F_0=\cdots =F_3=0$ is a curve), is the ideal $(F_0, F_1, F_2, F_3)$ saturated?
Furthermore, is this ideal saturated for \emph{every reduced foliation}?

\vskip 0.5cm

We note that given a reduced foliation defined by $\omega= \sum_{i=0}^3 F_i dz_i$,
using for example the algorithm \emph{nsatiety( )}  implemented in the {\sc Singular} library
``noether.lib" \cite{Singular},
one immediately checks
whether the ideal $(F_0, F_1, F_2, F_3)$ is saturated or not.

\vskip 0.5cm

Let us point out that Theorem 3.3 in the paper of Campillo and
Olivares (\cite{Campillo-Olivares}) can be restated as follows:
for any foliation in $\PP^2$ defined by a 1-form
$$
\omega= \sum_{i=0}^2 F_i dx_i
$$
with singular set of codimension at least two, the ideal $(F_0,F_1,F_2)$ is
saturated. Our approach gives a much simpler proof of that result, since the tangent sheaf of
such a foliation is a line bundle.

\section{On the determination of a split foliation by the singular scheme}
Now we face the problem of deciding when a codimension one
foliation in $\PP^3$ is determined by its singular scheme,
as stated in the first Introduction. In
$\PP^2$, the problem was completely solved in
\cite{Campillo-Olivares}, Theorem 3.5:  a degree $r=0$, or $r \geq
2$ foliation is uniquely determined by its singular subscheme; for
degree $r=1$ they construct a 1-dimensional family of distinct
foliations with the same singular subscheme.

\vskip 0.3cm

In $\PP^3$, we study this problem just for foliations whose
tangent sheaf splits (recall that this is trivially true for foliations in the plane).
We develop a method to deal with the problem, and obtain
affirmative results for certain splitting types (Theorem \ref{caractporsing}, Theorem \ref{pull-back2}).
We also show examples for the remaining splitting types, showing  that the answer depends
on the particular singular scheme $Z$.

\vskip 0.3cm

We begin by noting the following
\begin{remark}\label{grado}
Two split foliations
defined by $F_0 dz_0 + F_1 dz_1 + F_2 dz_2 + F_3 dz_3$ and
$G_0 dz_0 + G_1 dz_1 + G_2 dz_2 + G_3 dz_3$,  with the same singular scheme $Z$ are of the same degree $d$.
Indeed, it is enough to observe that by Theorem \ref{splitsaturado}, the
homogenous ideal $(F_0, \ldots, F_3)=(G_0, \ldots, G_3)$ is saturated.
\end{remark}

We now prove a lemma that contains a basic idea for the problem. Recall (see, for example
\cite{Cukierman-Pereira}) that a codimension one holomorphic distribution is defined just
in the same way as a foliation, removing the integrability condition.

\begin{lema}\label{sicigia-lineal}
Let $\mathcal F_\varphi$  be a foliation defined by a projective
integrable 1-form $ \omega= \sum_{i=0}^3 F_i dz_i $ with split
tangent sheaf, and singular scheme $Z$.  Any distribution
$\mathcal F^{\prime}_{\varphi^{\prime}}$ with singular scheme $Z$
is split, and induces a linear syzygy
$$
\ell_0 F_0 + \ell_1 F_1 + \ell_2 F_2 + \ell_3 F_3=0 \, ,\quad
\ell_i\in \CC[z_0, z_1,z_2,z_3], \; \deg\ell_i=1\, , \; i=0,\ldots , 3.
$$
\end{lema}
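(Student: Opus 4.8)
The plan is to first promote the distribution $\F'$ to a split vector bundle using the machinery already in place, and only then to read off the linear syzygy by comparing the coefficients of $\omega$ and $\omega'$ through the Euler relation. First I would record that, since $\F$ is split, it is in particular locally free, so Theorem~\ref{locallyfree} says $Z$ is a curve, and then (using $d>1$) Theorem~\ref{splitaCM} says $Z$ is an arithmetically Cohen--Macaulay curve. The next observation is that the distribution $\F'$, defined by a form $\omega'=\sum_i G_i\,dz_i$ of some degree $d'$, sits in the analogue of the exact sequence~(\ref{sucesionideal}), namely $0\to\F'\to T\PP^3\to\I_Z\otimes\Oc_{\PP^3}(d'+2)\to 0$; integrability of $\omega'$ was never used in producing that sequence. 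Hence the proofs of Theorems~\ref{locallyfree} and~\ref{splitaCM}, which are purely cohomological (Bott vanishing, finiteness of the Hartshorne--Rao module, and the criterion of Theorem~\ref{criterio}), apply verbatim to $\F'$: since $Z$ is a curve, $\F'$ is locally free, and since $Z$ is arithmetically Cohen--Macaulay, $\F'$ splits. This settles the first assertion.

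With $\F'$ split, Theorem~\ref{splitsaturado} shows the ideal $(G_0,\dots,G_3)$ is saturated; as it defines the same scheme $Z$ as the saturated ideal $(F_0,\dots,F_3)$, the two homogeneous ideals coincide. I would then conclude $d=d'$ exactly as in Remark~\ref{grado}: if, say, $d'>d$, every element of $(F_0,\dots,F_3)$ of degree $d'+1$ lies in $\mc\,(F_0,\dots,F_3)$, because the ideal is generated in degree $d+1<d'+1$; in particular the $G_i$ lie in $\mc\,(G_0,\dots,G_3)$, so $(G_0,\dots,G_3)=\mc\,(G_0,\dots,G_3)$ and graded Nakayama forces the ideal to vanish, a contradiction. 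Symmetry excludes $d>d'$, so $d=d'$.

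The syzygy is now immediate. Each $G_i$ is a homogeneous element of degree $d+1$ of $(F_0,\dots,F_3)$, and since all four generators $F_j$ have degree $d+1$, the degree-$(d+1)$ graded piece of the ideal is exactly $\span_{\CC}\{F_0,F_1,F_2,F_3\}$; therefore $G_i=\sum_j c_{ij}F_j$ with $c_{ij}\in\CC$. Both $\omega$ and $\omega'$ are projective forms, so they satisfy the Euler relations $\sum_i z_i F_i=0$ and $\sum_i z_i G_i=0$. Substituting gives $0=\sum_i z_i G_i=\sum_j\bigl(\sum_i c_{ij}z_i\bigr)F_j=\sum_j \ell_j F_j$, where $\ell_j=\sum_i c_{ij}z_i$ is a linear form. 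This is the desired linear syzygy, and it is nontrivial because the matrix $(c_{ij})$ is nonzero (otherwise $\omega'=0$).

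I expect the main obstacle to be the promotion of $\F'$ from a mere distribution to a split bundle: one must verify carefully that the hypotheses of Theorems~\ref{locallyfree} and~\ref{splitaCM} depend only on the sequence~(\ref{sucesionideal}) and on cohomological vanishing, so that the integrability assumption plays no role and both statements survive for distributions (with a minor check that the degree $d'$ falls in the range for which the relevant groups $H^2(\PP^3,T\PP^3(p))$ vanish in Theorem~\ref{criterio}). A secondary delicate point is the degree comparison, which must be run through saturation and graded Nakayama rather than a naive dimension count, since a priori $\omega'$ could have any degree. Once these two points are secured, the Euler-relation computation producing the $\ell_j$ is the conceptual heart and is very short.
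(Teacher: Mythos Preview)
Your approach is essentially the paper's: both argue that the proof of Theorem~\ref{splitaCM} is purely cohomological and therefore applies to distributions, conclude that $\F'$ splits, invoke Theorem~\ref{splitsaturado} to identify the two saturated homogeneous ideals (whence $d=d'$, exactly as in Remark~\ref{grado}), express each $G_i$ as a constant linear combination of the $F_j$, and then contract with the Euler field to produce the linear syzygy. The paper additionally records that the splitting types coincide ($a=a'$, $b=b'$) via a comparison of $h^0(\I_Z(\ell))$, and asserts that the transition matrix lies in $GL(4,\CC)$; neither refinement is needed for the Lemma as stated, and your more explicit handling of the degree equality via graded Nakayama is a cleaner version of what Remark~\ref{grado} does.
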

\begin{demostracion}
First note that Theorem \ref{splitaCM} applies for distributions,
as the integrability does not enter in the proof; thus we conclude
that $\mathcal F^{\prime}$ splits. Then
$$
\mathcal F^{\prime} = \mathcal O_{\PP^3}(a^{\prime}) \oplus
\mathcal O_{\PP^3}(b^{\prime})\, , \quad \text{with}\quad
a^{\prime}+b^{\prime}=2-d, \; \text{and}\; a^{\prime}\leq b^{\prime}\leq 1.
$$
Note that  $a^{\prime}\leq b^{\prime}\leq 1$ follows from the stability of $T\PP^3$ (see \cite{Okonek}).
Furthermore, $a=a^{\prime}$ and $b=b^{\prime}$, as can be deduced by considering the long cohomology
sequences obtained from (\ref{sucesionideal}) and
\begin{equation}
0 \longrightarrow {\mathcal F^{\prime}}
\stackrel{\varphi^{\prime}}\longrightarrow T\mathbb P^3
\longrightarrow \I_Z \otimes \Oc_{\PP^n}(d+2) \longrightarrow 0,
\end{equation}
as we have that $h^0(\PP^3, \mathcal I_Z(\ell))$ equals
$$
h^0(\PP^3, T\PP^3(\ell -d -2))
- h^0(\PP^3, \mathcal F(\ell-d-2))\, ,
$$
and also
$$
h^0(\PP^3, T \PP^3(\ell -d-2))
- h^0(\PP^3, \mathcal F^{\prime}(\ell-d-2))\,
$$
for every $\ell \in \mathbb Z$.

The singular subscheme of the distribution ${\mathcal
F}^{\prime}_{{\varphi}^{\prime}}$ is $Z$, and it is defined by the
ideal $(F^{\prime}_0, F^{\prime}_1, F^{\prime}_2, F^{\prime}_3)$,
where the $F^{\prime}_i$ are degree $d+1$ homogeneous polynomials
such that $\omega^{\prime}= \sum_{i=0}^3 F^{\prime}_i dz_i$
defines ${\mathcal F}^{\prime}_{{\varphi}^{\prime}}$.

Hence, there is a $4\times 4$ matrix $M=(a_{ij})\in \text{GL}\,(4, \CC)$
such that
\begin{equation}\label{matrizactuando}
M \cdot \left( \begin{array}{c} F_0 \\
                                F_1 \\
                                F_2 \\
                                F_3 \end{array} \right)
= \left( \begin{array}{c}       F^{\prime}_0 \\
                                F^{\prime}_1 \\
                                F^{\prime}_2 \\
                                F^{\prime}_3 \end{array} \right) .
\end{equation}
Now, the Euler condition $\sum_{j=0}^3 z_j F^{\prime}_j =0$, gives
$$
\left( z_0 \; z_1 \; z_2 \; z_3 \right)
\cdot M
\cdot \left( \begin{array}{c} F_0 \\
                                F_1 \\
                                F_2 \\
                                F_3 \end{array} \right) =0,
$$
which produces a linear syzygy:
$$
\ell_0 F_0 + \ell_1 F_1 + \ell_2 F_2 + \ell_3 F_3=0.
$$
\end{demostracion}


\vskip 0.3cm

Now then, $H^0(\PP^3,\F \oplus \Oc_{\PP^3})$ is the vector space
of linear syzygies because of (\ref{mixta}). So such a
distribution as in the Lemma above gives a 1-dimensional linear
subspace of the vector space of linear syzygies
$H^0(\PP^3,\F\oplus \Oc_{\PP^3})$. Conversely, given a linear
subspace of $H^0(\PP^3,\F\oplus \Oc_{\PP^3})$ of dimension one, we
can take a generator and express it in the form
     $$
        \left( z_0 \; z_1 \; z_2 \; z_3 \right) \cdot M \, .
     $$
If $M\in GL(4, \CC)$, then equation $(\ref{matrizactuando})$ gives
coefficients for a homogeneous 1-form defining a distribution with
the same singular scheme. Therefore, we can assure that the family
of distributions with the same singular scheme as the foliation
$\F_\varphi$ is parameterized by a Zariski open subset ${\mathcal
D}_{\F_\varphi}$ of $\PP(H^0(\PP^3,\F\oplus \Oc_{\PP^3}))\subset
\PP(H^0(\PP^3,\Oc_{\PP^3}(1)^{\oplus 4}))$, obtained after
removing the  algebraic subset corresponding to non-invertible
matrices.

Therefore, \emph{foliations sharing the singular scheme $Z$ correspond to an algebraic subset of
${\mathcal D}_{\F_\varphi}$, defined by the equations expressing the
integrability condition}.

\vskip 0.3cm

Let us just point out that if we
find a basis of $H^0(\PP^3,\F\oplus \Oc_{\PP^3})$ it is easy to write explicit equations for
the integrability condition.

Now we present our first result on the characterization of a foliation by its singular
scheme:

\begin{teorema}\label{caractporsing}
Let $\mathcal F_\varphi$ be a degree $d$ reduced foliation in $\PP^3$, with $\mathcal F$ a rank two split
vector bundle, defined by a projective integrable one form
$
\omega= \sum_{i=0}^3 F_i dz_i .
$
Let $Z$ be its singular subscheme.
Suppose that
$$
\mathcal F = \mathcal O_{\PP^3}(a)\oplus \mathcal O_{\PP^3}(b)\, , \quad
\text{with}\; a\leq b\leq -1, \; a+b=2-d .
$$
Then,
if ${\mathcal F}^{\prime}_{{\varphi}^{\prime}}$
is another foliation in $\PP^3$ with the same singular subscheme $Z$,
defined by the form
$
\omega^{\prime}= \sum_{i=0}^3 F^{\prime}_i dz_i \, ,
$
we have $\omega=\lambda \omega^{\prime}$ for $\lambda \in \CC^*$.
\end{teorema}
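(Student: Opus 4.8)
The plan is to exploit the parameterization of distributions sharing the singular scheme $Z$ established right after Lemma \ref{sicigia-lineal}: such distributions correspond, up to scalar, to points of $\PP(H^0(\PP^3, \mathcal F \oplus \Oc_{\PP^3}))$, and I would show that under the hypothesis $a \le b \le -1$ this projective space collapses to a single point. First I would compute the relevant space of sections. Since $\mathcal F = \mathcal O_{\PP^3}(a) \oplus \mathcal O_{\PP^3}(b)$ with $a \le b \le -1$, both summands have strictly negative degree, so $H^0(\PP^3, \mathcal F) = 0$, and hence
$$
H^0(\PP^3, \mathcal F \oplus \Oc_{\PP^3}) = H^0(\PP^3, \Oc_{\PP^3}) = \CC .
$$
By the identification coming from (\ref{mixta}), this one-dimensional space is exactly the space of linear syzygies among $F_0, F_1, F_2, F_3$, and it is spanned by the Euler syzygy $(z_0, z_1, z_2, z_3)$ arising from the relation $\sum_j z_j F_j = 0$.

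Next I would feed this computation into Lemma \ref{sicigia-lineal} and its matrix $M \in \mathrm{GL}(4, \CC)$. The lemma produces, from $\omega^{\prime}$, a linear syzygy of $(F_0, F_1, F_2, F_3)$ whose coefficient row is precisely $(z_0 \; z_1 \; z_2 \; z_3) \cdot M$. Because the whole space of linear syzygies is one-dimensional and generated by $(z_0, z_1, z_2, z_3)$, this row must be a scalar multiple of the Euler syzygy, i.e.
$$
(z_0 \; z_1 \; z_2 \; z_3) \cdot M = \lambda \, (z_0 \; z_1 \; z_2 \; z_3), \qquad \lambda \in \CC .
$$
Since $z_0, z_1, z_2, z_3$ are linearly independent, this forces $M = \lambda I$, and invertibility of $M$ gives $\lambda \in \CC^*$. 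Substituting $M = \lambda I$ into the defining relation (\ref{matrizactuando}) yields $F^{\prime}_i = \lambda F_i$ for every $i$, that is $\omega^{\prime} = \lambda \omega$; replacing $\lambda$ by $\lambda^{-1}$ gives the stated conclusion $\omega = \lambda \omega^{\prime}$.

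I expect the only genuine content to be the section computation $H^0(\PP^3, \mathcal F \oplus \Oc_{\PP^3}) = \CC$, which is where the hypothesis $a \le b \le -1$ is indispensable: it is exactly what kills all global sections of $\mathcal F$ and leaves the Euler relation as the unique linear syzygy up to scale. If instead $b = 0$ or $b = 1$, then $\mathcal F$ would acquire extra global sections, producing linear syzygies beyond Euler, the row $(z_0 \; z_1 \; z_2 \; z_3)\cdot M$ could fail to be scalar, and the rigidity would break down. This is precisely the dividing line that the remaining splitting types treated later in the paper (via the $\Oc_{\PP^3}(1)$ and $\Oc_{\PP^3}$ subbundle cases) must handle separately. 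Once the dimension count is in place, the matrix step is purely formal, so I do not anticipate any serious obstacle beyond correctly invoking the earlier parameterization.
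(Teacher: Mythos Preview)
Your argument is correct and matches the paper's own proof essentially line for line: invoke Lemma~\ref{sicigia-lineal} to obtain a linear syzygy, use sequence~(\ref{mixta}) together with $H^0(\PP^3,\mathcal F)=0$ (from $a\le b\le -1$) to see that every linear syzygy is a scalar multiple of Euler, and conclude $M=\lambda\,\mathrm{Id}$. Your write-up is simply more explicit about the section computation and the matrix step than the paper's terse version.
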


\begin{demostracion}
By Lemma \ref{sicigia-lineal}, we
get a linear syzygy
$$
\ell_0 F_0 + \ell_1 F_1 + \ell_2 F_2 + \ell_3 F_3=0.
$$

From the sequence (\ref{mixta}), we conclude that this syzygy is a multiple of the
Euler relation
$\sum_{j=0}^3 z_j F_j =0$, so there is a $\lambda \in \CC^*$ with $M= \lambda \, Id$ and
also
$\omega^{\prime} = \lambda \omega$. \end{demostracion}

We analyze now the other possible splitting types. To start with we prove,
\begin{proposicion}\label{pull-back1}
Let $\mathcal F_\varphi$ be a degree $d$ reduced foliation in $\PP^3$.
$\mathcal F_\varphi$ is the linear pull-back of a degree $d$ reduced foliation in $\PP^2$ if and
only if
$
\mathcal F = \mathcal O_{\PP^3}(1)\oplus \mathcal O_{\PP^3}(1-d)\, .
$
\end{proposicion}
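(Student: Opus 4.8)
The plan is to realise the rank-one summand $\mathcal O_{\PP^3}(1)$ of $\mathcal F$ as the constant vector field tangent to the fibres of the linear projection, and to read the splitting type off the defining sequence (\ref{sucesionideal}) together with the stability bound already used in Lemma \ref{sicigia-lineal}. For the direction ``pull-back $\Rightarrow$ splitting type'', after a linear change of coordinates I may assume the projection is $\pi\colon\PP^3\dashrightarrow\PP^2$, $[z_0:\cdots:z_3]\mapsto[z_0:z_1:z_2]$, with centre $p=[0:0:0:1]$, so that $\omega=\sum_{i=0}^2 F_i(z_0,z_1,z_2)\,dz_i$ and $F_3=0$. Here $Z$ is the cone over the $0$-dimensional scheme $\mathrm{Sing}(\mathcal G)\subset\PP^2$; since $0$-dimensional schemes are arithmetically Cohen--Macaulay and a cone over an ACM scheme is ACM, $Z$ is an arithmetically Cohen--Macaulay curve and hence $\mathcal F$ splits by Theorem \ref{splitaCM}. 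Now the constant field $\partial/\partial z_3$ satisfies $\omega(\partial/\partial z_3)=F_3=0$, so, using $H^0(\PP^3,T\PP^3(-1))\cong\CC^4$ (the constant fields), it lifts to a nonzero morphism $\mathcal O_{\PP^3}(1)\to\mathcal F$. Writing $\mathcal F=\mathcal O_{\PP^3}(a)\oplus\mathcal O_{\PP^3}(b)$ with $a+b=2-d$ and $a\le b\le 1$, a nonzero map out of $\mathcal O_{\PP^3}(1)$ needs $a\ge1$ or $b\ge1$, which together with $a\le b\le1$ forces $b=1$ and hence $a=1-d$.

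For the converse I start from $\mathcal F=\mathcal O_{\PP^3}(1)\oplus\mathcal O_{\PP^3}(1-d)$. Composing the inclusion of the first summand with $\mathcal F\hookrightarrow T\PP^3$ gives a nonzero section of $T\PP^3(-1)$, i.e. a constant field $v=\sum_j c_j\,\partial/\partial z_j$; after a linear change of coordinates I take $v=\partial/\partial z_3$. Tangency $v\in\mathcal F$ yields $i_v\omega=F_3=0$, so $\omega=F_0\,dz_0+F_1\,dz_1+F_2\,dz_2$. The content of the proof is then to show that the remaining $F_i$ do not depend on $z_3$, so that $\omega$ descends through $\pi$.

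This last step is the main obstacle, and I would resolve it using integrability. Since $i_v\omega=0$, Cartan's formula gives $\mathcal L_v\omega=i_v\,d\omega=\sum_i(\partial F_i/\partial z_3)\,dz_i$ (the term $dF_3$ vanishes because $F_3=0$). Contracting the relation $\omega\wedge d\omega=0$ with $v$ gives $i_v(\omega\wedge d\omega)=-\,\omega\wedge\mathcal L_v\omega=0$, so $\mathcal L_v\omega=g\,\omega$ for a rational $g$ that is regular off the codimension $\ge2$ locus $Z$ and is therefore a homogeneous polynomial. Comparing degrees, the coefficients of $\mathcal L_v\omega$ have degree $d$ while those of $g\,\omega$ have degree $\deg g+d+1$, forcing $\deg g=-1$; this is impossible unless $\mathcal L_v\omega=0$, i.e. $\partial F_i/\partial z_3=0$ for all $i$. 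Hence each $F_i\in\CC[z_0,z_1,z_2]$ and $\omega=\pi^*\eta$ with $\eta=\sum_{i=0}^2 F_i\,dx_i$.

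It remains to record that $\eta$ defines a genuine reduced degree $d$ foliation in $\PP^2$. Because $\mathcal F_\varphi$ is reduced with $Z$ of codimension at least two, the $F_i$ cannot share a common factor (such a factor would be a divisorial component of $Z$), so $\eta$ has only isolated singularities and is reduced of degree $d$; by construction $\mathcal F_\varphi=\pi^*\mathcal F_\eta$ is its linear pull-back. I expect the only genuinely delicate points to be the passage from $\omega\wedge\mathcal L_v\omega=0$ to $\mathcal L_v\omega=g\,\omega$ with polynomial $g$ (justified by normality and the codimension of $Z$) and the degree bookkeeping that then kills $g$.
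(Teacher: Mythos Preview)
Your argument is correct, and for the substantive converse direction it follows a genuinely different route from the paper. After obtaining $F_3=0$ (which both proofs do in the same way, via the $\mathcal O_{\PP^3}(1)$ summand producing a constant tangent field), the paper passes to affine charts: in $\{z'_3=1\}$ the form becomes $f\,dz'_1+g\,dz'_2$, integrability yields $f\,\partial g/\partial z'_0=g\,\partial f/\partial z'_0$, and since the reducedness hypothesis makes $f,g$ coprime one gets $f\mid \partial f/\partial z'_0$, forcing $\partial f/\partial z'_0=0$; a second chart finishes. Your approach stays global: from $i_v\omega=0$ and $\omega\wedge d\omega=0$ you extract $\omega\wedge\mathcal L_v\omega=0$, use codimension~$\ge 2$ of $Z$ and normality to write $\mathcal L_v\omega=g\,\omega$ with $g$ polynomial, and then the homogeneity mismatch (degree $d$ versus $d+1$) kills $g$. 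This is cleaner---no chart-by-chart bookkeeping---while the paper's version is more elementary and makes the role of coprimality explicit. Both hinge on the same two ingredients (integrability and $\mathrm{codim}\,Z\ge 2$), just packaged differently.

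For the forward direction the paper simply cites \cite{Cukierman-Pereira}, whereas you argue via Theorem~\ref{splitaCM} that the cone $Z$ is arithmetically Cohen--Macaulay and hence $\mathcal F$ splits, then pin down the type using the constant field $\partial/\partial z_3$ and the bound $b\le 1$. One small caveat: Theorem~\ref{splitaCM} is stated for $d>1$, so strictly speaking your forward argument needs a separate (easy) remark for $d\in\{0,1\}$; alternatively, you can bypass Theorem~\ref{splitaCM} entirely by noting that $\partial/\partial z_3$ is in fact a nowhere-vanishing section of $\mathcal F(-1)$ (its image in $T\PP^3(-1)$ generates a rank-one subbundle), which already splits off $\mathcal O_{\PP^3}(1)$ from $\mathcal F$ without first knowing $\mathcal F$ is a sum of line bundles.
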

\begin{demostracion}
When $\mathcal F_\varphi$ is a linear pull-back it is known (see \cite{Cukierman-Pereira})
that the tangent sheaf splits, and that the splitting type is as in the statement of the theorem.
Indeed, note that $\mathcal O_{\PP^3}(1)$ is the tangent sheaf of the foliation whose leaves
are the fibers of the projection, and the splitting is given by the projection.

For the converse, suppose that the tangent sheaf of $\mathcal F_\varphi$  splits as
$$
\mathcal F = \mathcal O_{\PP^3}(1)\oplus \mathcal O_{\PP^3}(1-d)\, .
$$
Then, by first tensoring (\ref{mixta}) with $\mathcal O_{\PP^3}(-1)$  and
considering the exact cohomology sequence, we get four complex numbers $a_0, \ldots ,a_3$
(not all of them equal to zero)
such that $a_0 F_0 + a_1 F_1 + a_2 F_2 +a_3 F_3=0$. Suppose $a_0\neq 0$.

If we change coordinates:
$$
\left\{ \begin{array}{rcl}
z_0 & = & a_0 z^{\prime}_0 \\
z_1 & = & z^{\prime}_1 + a_1 z^{\prime}_0 \\
z_2 & = & z^{\prime}_2 + a_2 z^{\prime}_0 \\
z_3 & = & z^{\prime}_3 + a_3 z^{\prime}_0 \end{array}\right.
$$
we get a new expression for the form defining $\mathcal F_\varphi$:
$$
\eta = G_1 dz^{\prime}_1 + G_2 dz^{\prime}_2 + G_3 dz^{\prime}_3
$$
where $G_1$, $G_2$ and $G_3$ are homogeneous in the new variables.

Let us prove that $G_i \in \CC [z^{\prime}_1,
z^{\prime}_2,z^{\prime}_3]$. To do that, we go to the affine open
$z^{\prime}_3=1$, and get a 1-form
$$
\eta_3 = G_1(z^{\prime}_0,z^{\prime}_1,z^{\prime}_2,1) dz^{\prime}_1 +
G_2(z^{\prime}_0,z^{\prime}_1,z^{\prime}_2,1) dz^{\prime}_2
$$
that defines the foliation in that affine chart.
To simplify, we write:
$$
f(z^{\prime}_0,z^{\prime}_1,z^{\prime}_2)=G_1(z^{\prime}_0,z^{\prime}_1,z^{\prime}_2,1),
\quad
g(z^{\prime}_0,z^{\prime}_1,z^{\prime}_2)=G_2(z^{\prime}_0,z^{\prime}_1,z^{\prime}_2,1).
$$
As the singular set of the foliation is of codimension two, $f$ and $g$ have no common
irreducible factor.

 Note that $\eta_3$ is integrable (i.e. $\eta_3 \wedge d\eta_3=0$), and hence we have:
$$
\frac{\partial f}{\partial z^{\prime}_0} \;  g =
\frac{\partial g}{\partial z^{\prime}_0} \; f \, .
$$
From the equality above, and as $f$ and $g$ have no common irreducible factor,
it follows that $f$  divides $\frac{\partial f}{\partial z^{\prime}_0}$ and hence
$\frac{\partial f}{\partial z^{\prime}_0} =0$. Analogously,
$\frac{\partial g}{\partial z^{\prime}_0}=0$.
Thus $f$ and $g$ do
not depend on $z^{\prime}_0$.
Proceeding in a analogous way in
another affine open set, we can deduce that $G_0, G_1$ and $G_2$
are in $\CC [z^{\prime}_1, z^{\prime}_2,z^{\prime}_3]$.

We can finally conclude that $\mathcal F_\varphi$ is a linear pull-back of the
foliation given by
$$
\eta = G_1 dz^{\prime}_1 + G_2 dz^{\prime}_2 + G_3 dz^{\prime}_3
$$
in the projective plane $z^{\prime}_0=0$.
\end{demostracion}

\vskip 0.3cm

Now, we come back to the problem of deciding whether the singular scheme
characterizes split foliations and prove:
\begin{teorema}\label{pull-back2}
Let $\mathcal F_\varphi$ be a degree $d\neq 1$ reduced foliation in $\PP^3$, with
$$
\mathcal F = \mathcal O_{\PP^3}(1)\oplus \mathcal O_{\PP^3}(1-d)\, ,
$$
and let $Z$ be its singular scheme. There is no other foliation
with singular scheme $Z$.
\end{teorema}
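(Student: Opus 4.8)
The plan is to exploit Proposition \ref{pull-back1}, which tells us that the splitting type $\mathcal O_{\PP^3}(1)\oplus\mathcal O_{\PP^3}(1-d)$ forces $\mathcal F_\varphi$ to be a linear pull-back. Thus I may choose homogeneous coordinates $z_0,z_1,z_2,z_3$ in which
$$
\omega = G_1\, dz_1 + G_2\, dz_2 + G_3\, dz_3, \qquad G_i \in \CC[z_1,z_2,z_3],
$$
so that $F_0=0$ and $F_1,F_2,F_3$ do not involve $z_0$. My aim is in fact to prove the stronger statement that every \emph{distribution} (not only every foliation) with singular scheme $Z$ is a scalar multiple of $\omega$; by Lemma \ref{sicigia-lineal} integrability plays no role in what follows, so this costs nothing and immediately yields the theorem.

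By the discussion following Lemma \ref{sicigia-lineal}, such distributions are parameterized by the invertible matrices $M$ for which $(z_0\ z_1\ z_2\ z_3)\,M$ is a linear syzygy of $(F_0,F_1,F_2,F_3)$, the new coefficients being $M\cdot(F_0,F_1,F_2,F_3)^{t}$. So the heart of the matter is to determine the space $H^0(\PP^3,\mathcal F\oplus\Oc_{\PP^3})$ of linear syzygies $(\ell_0,\ell_1,\ell_2,\ell_3)$ with $\sum_i \ell_i F_i=0$. Since $F_0=0$, the component $\ell_0$ is free and the remaining condition reads $\ell_1 G_1+\ell_2 G_2+\ell_3 G_3=0$; writing $\ell_i=a_i z_0+m_i$ with $a_i\in\CC$ and $m_i\in\CC[z_1,z_2,z_3]$ linear and separating powers of $z_0$, this splits into a constant syzygy $\sum a_i G_i=0$ and a linear syzygy $\sum m_i G_i=0$ among $G_1,G_2,G_3$. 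Here the hypothesis $d\neq1$ enters: the $G_i$ define a degree $d$ foliation in $\PP^2$ whose tangent sheaf is the line bundle $\Oc_{\PP^2}(1-d)$, and the plane analogue of the mixed sequence (\ref{mixta}), suitably twisted, identifies the constant and the linear syzygies of $(G_1,G_2,G_3)$ with $H^0(\Oc_{\PP^2}(-d))\oplus H^0(\Oc_{\PP^2}(-1))$ and $H^0(\Oc_{\PP^2}(1-d))\oplus H^0(\Oc_{\PP^2})$ respectively. For $d\geq2$ the first two groups vanish and the third is spanned by the Euler relation, so the only constant syzygy is trivial and the only linear syzygies are the multiples of $(z_1,z_2,z_3)$. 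Hence $a_i=0$ and $(\ell_1,\ell_2,\ell_3)=\lambda(z_1,z_2,z_3)$, and the syzygy space has the explicit basis $(z_0,0,0,0),(z_1,0,0,0),(z_2,0,0,0),(z_3,0,0,0),(0,z_1,z_2,z_3)$.

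It then remains to feed these syzygies through the parameterization. Writing a general syzygy as $c_1(z_0,0,0,0)+\cdots+c_4(z_3,0,0,0)+c_5(0,z_1,z_2,z_3)$ and reading off $M$ (its $(j,k)$ entry being the coefficient of $z_j$ in the $k$-th component) one obtains a lower triangular matrix with diagonal $(c_1,c_5,c_5,c_5)$; applying it to $(0,G_1,G_2,G_3)^{t}$ returns $(0,c_5G_1,c_5G_2,c_5G_3)^{t}$, that is $\omega'=c_5\,\omega$. Therefore every distribution with singular scheme $Z$ is proportional to $\omega$, and in particular no distinct foliation shares the scheme $Z$.

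The main obstacle is the complete control of the linear syzygy space, i.e. ruling out every linear syzygy of $(G_1,G_2,G_3)$ other than Euler: this is exactly the phenomenon that fails for $d=1$, where the extra section of $\Oc_{\PP^2}$ (from $1-d=0$) produces the Campillo--Olivares one–parameter family, and it is the reason for the hypothesis $d\neq1$. A secondary point needing separate care is the degree $d=0$ case, where $\Oc_{\PP^2}(1-d)=\Oc_{\PP^2}(1)$ acquires sections and genuinely new syzygies appear; there one cannot conclude by a dimension count alone, but must instead invoke the scheme structure, using the saturation of $(F_0,\dots,F_3)$ from Theorem \ref{splitsaturado} to verify directly that the associated matrices still return a scalar multiple of $\omega$.
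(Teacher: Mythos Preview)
Your argument for $d\geq 2$ is correct and follows a genuinely different path from the paper's. The paper argues geometrically: since both $\mathcal F_\varphi$ and any competing foliation $\mathcal G_\psi$ split with the same type, Proposition~\ref{pull-back1} makes them both linear pull-backs; since $Z$ is then a cone with vertex $p$, slicing by a general hyperplane $H$ not through $p$ exhibits both as pull-backs of degree~$d$ plane foliations sharing the singular scheme $H\cap Z$, and one concludes by citing the Campillo--Olivares theorem for $\PP^2$. Your approach instead computes the full linear syzygy module of $(F_0,\dots,F_3)$ explicitly (reducing, via the decomposition $\ell_i=a_iz_0+m_i$, to the $\PP^2$ syzygy module of $(G_1,G_2,G_3)$, which for $d\geq 2$ is spanned by Euler alone), writes down the associated matrices $M$, and checks directly that they all return $c_5\,\omega$. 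This has the pleasant by-product of establishing uniqueness already at the level of \emph{distributions}, something the paper's proof cannot do since Proposition~\ref{pull-back1} genuinely uses integrability for the competing $\mathcal G_\psi$. The paper's route, on the other hand, is shorter and more conceptual, and makes transparent that the $\PP^3$ statement is literally a cone over the $\PP^2$ one.

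Your treatment of $d=0$, however, is incomplete, and the appeal to saturation via Theorem~\ref{splitsaturado} is not the right tool: saturation of $(F_0,\dots,F_3)$ is already what underlies the parameterization by matrices $M$ in Lemma~\ref{sicigia-lineal}, and carries no further information at this point. What you actually need is simply to carry out the same matrix computation with the enlarged syzygy basis; after normalizing the plane foliation to $z_1\,dz_2-z_2\,dz_1$ one finds that every invertible $M$ still returns a scalar multiple of $\omega$, because the extra syzygies populate entries of $M$ that are killed when multiplied against $(0,G_1,G_2,G_3)^t$. Alternatively, and more in the spirit of the paper, one may simply observe that a degree~$0$ codimension one foliation in $\PP^3$ is a pencil of hyperplanes, uniquely determined by its axis~$Z$.
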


\begin{demostracion}
Suppose there is another foliation $\mathcal G_\psi$ with singular scheme $Z$. As $Z$ is an arithmetically
Cohen-Macaulay curve, its tangent sheaf $\mathcal G$ splits, with the same splitting type as that of
$\mathcal F$.
Hence, from the previous Proposition \ref{pull-back1} both
$\mathcal F_\varphi$ and $\mathcal G_\psi$ are linear pull-backs of two foliations of a projective plane in $\PP^3$.

As $Z$ is a cone, taking any plane $H\subset \PP^3$ not passing through the vertex $p$ of $Z$, we obtain
both $\mathcal F_\varphi$ and $\mathcal G_\psi$ as linear pull-backs from $p$ of two degree $d$ foliations
in $H$ (see for example Section 2 in \cite{Cerveau-Lins-Neto}). Now, as $d\neq 1$, by Theorem 3.5 in
\cite{Campillo-Olivares} these two foliations are the same,
as their common singular subscheme is $H\cap Z$.
\end{demostracion}

\vskip 0.5cm

Finally, after Theorems \ref{caractporsing} and \ref{pull-back2}, there is just one
splitting type for each degree $d$ that remains to be considered:
$$
\mathcal F = \mathcal O_{\PP^3} \oplus \mathcal O_{\PP^3}(2-d)\, .
$$
We can make the following remarks:

\noindent $\bullet$ If $d=1$, a foliation with splitting type
$
\mathcal F = \mathcal O_{\PP^3} \oplus \mathcal O_{\PP^3}(1)
$
is a linear pull-back (by Proposition \ref{pull-back1}). The intersection with a
general plane $H$ gives a degree $1$ foliation in $H$ whose singular scheme
determines $Z$. However (see Remark 3.6 in \cite{Campillo-Olivares}), there is a one
dimensional family of  different
foliations in the plane $H$ with the
same singular scheme, giving different foliations in $\PP^3$ by pull-back
\emph{with the same singular scheme as $\mathcal F_\varphi$}. In fact, for any
degree one foliation in $\PP^2$, using the immediate translation for dimension 2
of Lemma \ref{sicigia-lineal} we conclude that the family of plane foliations
with the same singular scheme is of dimension one.

\vskip 0.3cm

\noindent $\bullet$ If $d=2$, for a foliation ${\mathcal G}_\psi$
with split tangent sheaf $ \mathcal G = \mathcal O_{\PP^3} \oplus
\mathcal O_{\PP^3} $ there is a pair of linear vector fields $X$,
and $Y$ in $\psi(\mathcal G)$  generating a Lie algebra. According
to the classification, this can be abelian or isomorphic to the
affine Lie algebra.

In the abelian case,
we can diagonalize simultaneously both vector fields, and the resulting
foliation is logarithmic of type $\mathcal L (1,1,1,1)$,
as can be seen in \cite{Cukierman-Pereira}. Note that the other
possible type of degree 2 logarithmic
foliations $\mathcal L(1,1,2)$ have a tangent sheaf which is not locally free,
as it contains isolated points
(see \cite{Cukierman-Soares-Vainsencher}).

A foliation in $\mathcal L (1,1,1,1)$  is defined by a form (see \cite{Omegar}):
$$
\omega= \ell_0 \ell_1 \ell_2 \ell_3 \sum_{i=0}^3 \lambda_i \frac{d \ell_i}{\ell_i},
$$
where  the $\ell_i$ are linear forms in general position, and the
scalars satisfy
\begin{equation}\label{log}
\lambda_i\in \CC^* \, , \quad
\sum_{i=0}^3 \lambda_i =0.
\end{equation}

Its singular scheme $Z$ is the given by six lines
giving the edges of a tetrahedron obtaining by intersecting any two of the
$\ell_i$.

Now, the same linear forms $\ell_i$ with different choices of scalars
satisfying (\ref{log}) provide distinct foliations
with the same singular scheme, as these determine the holonomy of the foliation.

\vskip 3mm

If the Lie algebra generated by the linear vector fields is isomorphic to the
affine Lie algebra, after a linear change of coordinates we can choose generators
$X^{\prime}$ and $Y^{\prime}$ with
$$
[X^{\prime}, Y^{\prime}]=Y^{\prime} \, .
$$
Thus, we are in the setting of the exceptional component of degree 2 foliations in $\PP^3$,
introduced by Cerveau and Lins Neto
(see \cite{Cerveau-Lins-Neto}): a general
member is given by a split foliation (\cite{Nosotros}), with tangent sheaf
$
\mathcal F = \mathcal O_{\PP^3} \oplus \mathcal O_{\PP^3},
$
and they prove that this foliation ${\mathcal F}_\varphi$  is rigid
(i.e. an open dense
subset of the
component is described by the action of the group $PGL(4, \CC)$ on that foliation).

The singular scheme $Z$ has three irreducible components: a line $\ell$, a conic $C$ tangent to $\ell$ at
a point $p$, and a twisted cubic with the line $\ell$ as an inflection line at $p$.

In this setting,  the reader can check
that the equations expressing the integrability, obtained
from  the basis of
$H^0(\PP^3, \F \oplus \mathcal O_{\PP^3})$ given by $X^{\prime}, Y^{\prime}$
(see their explicit expression in \cite{Cerveau-Lins-Neto}) and the radial vector field
$R$ have a unique solution: the one corresponding to the foliation $\F_{\varphi}$ itself.

\vskip 0.3cm

\noindent $\bullet$ If $d >2$, we have that $\dim H^0(\PP^3, \F \oplus \mathcal O_{\PP^3})=2$.
In \cite{Nosotros}, it is proven that there is an exceptional component of the
space of (codimension one) degree $d$ foliations in $\PP^3$, whose general member is a foliation
$\F_{\varphi}$ with split
tangent sheaf of the form
$
\mathcal O_{\PP^3} \oplus \mathcal O_{\PP^3}(2-d)
$.

$\F_{\varphi}$ is associated to a representation of the affine Lie algebra, in such a way
that in an affine open set, $\F_{\varphi}$ is defined by a one form
$
\omega = i_S \, i_X (d\, Vol)\, ,
$
where $S$ is a linear vector field and $X$ is quasi-homogeneous:
$$
S= (1+d+d^2) z_1 \frac{\partial}{\partial z_1} + (1+d) z_2 \frac{\partial}{\partial z_2}
+ z_3 \frac{\partial}{\partial z_3},
$$
$$
X=(1+d+d^2) z_2^d \frac{\partial}{\partial z_1} + (1+d) z_3^d \frac{\partial}{\partial z_2}
+ \frac{\partial}{\partial z_3}.
$$
Thus, we
can take $R$ and $S$ as a basis of the vector space
$H^0(\PP^3, \F \oplus \mathcal O_{\PP^3})$.

The vector field $S$ corresponds to a
linear syzygy that can be interpreted as a matrix $M\in GL(4, \CC)$
acting on the coefficients of the projective form
$
\overline{\omega}= \sum_{i=0}^{3} F_i dz_i ,
$  extending
$$
\omega = (1+d)(z_2 - z_3^{d+1}) dz_1 - (1+d+d^2)(z_1 - z_2^dz_3)dz_2 -
(1+2d+2d^2+d^3)(z_2^{d+1}-z_1z_3^{d})dz_3
$$
to projective space,
and defining $\F_{\varphi}$. By direct computation, we note that
the form $\omega_1$ with coefficients given by the equation $(\ref{matrizactuando})$
is not integrable.

Hence, any distribution with the same singular scheme $Z$ as the foliation $\F_{\varphi}$
(see \cite{Nosotros} for its explicit geometric description)
can be defined by a form
$\alpha_0 \omega_0 + \alpha_1 \omega_1$, where $\omega_0=\overline{\omega}$.
The integrability condition is
$$
\alpha_0 \alpha_1 \omega_0 \wedge d\omega_1 + \alpha_0 \alpha_1 \omega_1 \wedge d\omega_0
+ \alpha_1^2 \omega_1 \wedge d\omega_1 \, .
$$
An explicit computation shows that the only solution is $\alpha_1=0$, and so the only
foliation with singular scheme $Z$ is $\F_{\varphi}$.

\vskip 0.5cm

In principle, we do not know whether there are other irreducible
components of the space of foliations with the same split tangent
sheaf. In fact, despite the recent important results in the
literature,  the knowledge of these spaces is still quite
uncomplete.

\vskip 0.5cm

\begin{remark}
Observe that the proofs of the results in  Sections 2 and 3 except those of
Proposition \ref{pull-back1} and Theorem \ref{pull-back2},
could be immediately adapted to deal with singular distributions, since
no use is made of the integrability condition.

Note also that we can
extend our approach  to foliations in  $\PP^n$, and get immediate
results: Theorem \ref{splitsaturado}, Lemma \ref{sicigia-lineal}
and Theorem \ref{caractporsing} are valid in that setting.
\end{remark}


\begin{thebibliography}{99}

\bibitem{Baum-Bott} P. Baum, R. Bott, \emph{Singularities of Holomorphic Foliations}, Jour. of Diff.
Geom. {\bf 7}, (1972), 279--342.

\bibitem{Omegar} O. Calvo-Andrade, \emph{Irreducible components of the space of holomorphic foliations},
Math. Ann. vol 29, no. 1 (1994), 751--767.

\bibitem{Nosotros} O. Calvo-Andrade, D. Cerveau, L. Giraldo, A. Lins-Neto,
\emph{Irreducible components of the space of foliations associated to the affine Lie algebra},
Ergodic Theory Dynam. Systems, vol. 24 (2004), no. 4, 987--1014.

\bibitem{Campillo-Olivares} A. Campillo, J. Olivares, \emph{Polarity with respecto to a foliation and Cayley-Bacharach
Theorems}, J. reine angew. Math. 534 (2001), 95-118.

\bibitem{Cerveau-Lins-Neto} D. Cerveau, A. Lins-Neto, \emph{Irreducible components of the space
of holomorphic foliations of degree two in $\CC \PP (n)$, $n\geq 3$}, Ann. of Math. 143 (1996), 577--612.

\bibitem{Cukierman-Pereira} F. Cukierman, J.V. Pereira, \emph{Stability of holomorphic foliations
with split tangent sheaf}, Amer. J.  Math. vol 130, no. 2 (2008), 413--439.

\bibitem{Cukierman-Soares-Vainsencher} F. Cukierman, M. G. Soares, I. Vainsencher, \emph{Singularities of logarithmic
foliations}, Compositio Math. 142 (2006), 131--142.

\bibitem{Eisenbud} D. Eisenbud, \emph{Commutative Algebra with a view toward Algebraic Geometry}, Graduate Texts
in Mathematics, vol. 150 (1995), Springer Verlag.

\bibitem{Gomez-Mont2} X. G\'omez-Mont, \emph{Universal families of foliations by curves}, Ast\'erisque 150--151,
Soci\'et\'e Math\'ematique de France, 1987, 109--129.

\bibitem{Singular} G. -M. Greuel, G. Pfister. H. Sch\"onemann, {\sc Singular}
3.0. A Computer Algebra System for Polynomial Computations. Center
for Computer Algebra, University of Kaiserslautern (2005). {\tt
http://www.singular.uni-kl.de}.

\bibitem{Hartshorne77}
R. Hartshorne, \emph{Algebraic Geometry}, Springer, Graduate Text in Mathematics: 52 (1977).

\bibitem{Hartshorne} R. Hartshorne, \emph{Stable Reflexive Sheaves},
Math. Annalen, vol. 254 (1980),  121--176.

\bibitem{Hirzebruch} F. Hirzebruch, \emph{Topological Methods in Algebraic Geometry},
Classics in Mathematics. Springer-Verlag, Berlin (1995)


\bibitem{MY} Y. Mitera, J. Yoshizaki, \emph{The local analytic triviality of a complex analytic singular foliation},
Hokkaido Math. J., vol. 33 (2004), no. 2, 275--297.

\bibitem{miyaoka} Y. Miyaoka, T. Peternell, \emph{Geometry of Higher Dimensional Algebraic Varieties},
DMV-Seminar, vol. 26, Birkh\"{a}user Verlag (1997).


\bibitem{Okonek} C. Okonek, M. Schneider, H. Spindler, \emph{Vector bundles on complex projective spaces},
Progress in Mathematics, 3. Birkha\"user (1980).

\bibitem{Roggero-Valabrega} M. Roggero, \emph{On the rank 2 reflexive sheaves and the subcanonical curves in $\PP^3$},
Comm Algebra, 16(9) (1988), 1779--1790.

\bibitem{Suwa1} T. Suwa,
\emph{Unfoldings of complex analytic foliations with
singularities}, Japanese J. Math. vol 9 (1983), 181--206.


\bibitem{Suwa} T. Suwa,
{\em Indices of Vector Fields and Residues of Singular Holomorphic Foliations},  Actualit\'es Math\'ematiques,
Hermann \'Editeurs, Paris, 1998.


\end{thebibliography}
\end{document}